\documentclass[11pt]{article}

\usepackage[T1]{fontenc}
\usepackage{lmodern}

\usepackage{amsmath, amssymb, amsthm} 
\usepackage[margin=2.5cm]{geometry}
\usepackage{color, graphicx}
\usepackage[caption=false]{subfig}

\usepackage{etoolbox}
\makeatletter
\patchcmd{\@maketitle}{\LARGE \@title}{\LARGE\bfseries\@title}{}{}

\renewcommand{\@seccntformat}[1]{\csname the#1\endcsname.\quad}
\makeatother

\definecolor{darkblue}{rgb}{0,0,.5}

\usepackage{hyperref}
\hypersetup{
	colorlinks=true,		
	linkcolor=darkblue,		
	citecolor=red,		
	urlcolor=darkblue 		
}

\makeatletter
\def\th@plain{%
	\thm@notefont{}
	\itshape 
}
\def\th@definition{%
	\thm@notefont{}
	\normalfont 
}

\renewenvironment{proof}[1][\proofname]{\par
	\normalfont
	\topsep0\p@\@plus3\p@ \trivlist
	\item[\hskip\labelsep\itshape
	#1\@addpunct{.}]\ignorespaces
}{%
	\qed\endtrivlist
}
\makeatother

\newtheorem{theorem}{Theorem}[section]
\newtheorem{lemma}[theorem]{Lemma}

\theoremstyle{definition}
\newtheorem{definition}[theorem]{Definition}
\theoremstyle{definition}
\newtheorem{example}[theorem]{Example}
\theoremstyle{definition}
\newtheorem{remark}[theorem]{Remark}

\usepackage[shortlabels]{enumitem}

\allowdisplaybreaks

\newcommand{\R}{\ensuremath{\mathbb R}}
\newcommand{\ball}{\ensuremath{\mathbb B}}

\newcommand{\ran}{\ensuremath{\operatorname{ran}}}
\newcommand{\zer}{\ensuremath{\operatorname{zer}}}
\newcommand{\dom}{\ensuremath{\operatorname{dom}}}

\newcommand{\gra}{\ensuremath{\operatorname{gra}}}
\newcommand{\Fix}{\ensuremath{\operatorname{Fix}}}

\def\beq{\begin{equation}}
\def\eeq{\end{equation}}
\def\baq{\begin{eqnarray}}
\def\eaq{\end{eqnarray}}
\def\baqn{\begin{eqnarray*}}
\def\eaqn{\end{eqnarray*}}

\if
{
\usepackage[pageref]{backref}
\renewcommand*{\backrefalt}[4]{%
\ifcase #1 %
(Not cited)%
\or
(Cited on p.~#2)%
\else
(Cited on pp.~#2)%
\fi
}

}
\fi

\usepackage{float}

\begin{document}

\title{Splitting Algorithms Using  Pairs Monotonicity to Solve the Sum of Two Non-monotone Operators Problem}

\author{
Ba Khiet Le\thanks{Analytical and Algebraic Methods in Optimization Research Group, Faculty of Mathematics and Statistics, Ton
Duc Thang University, Ho Chi Minh City, Vietnam. E-mail: \texttt{lebakhiet@tdtu.edu.vn}}, 
~
Zakaria Mazgouri\thanks{Sidi Mohammed Ben Abdellah University, 
National School of Applied Sciences, Fez, Morocco. 
E-mail: \texttt{zakariam511@gmail.com}}, 
~and~ 
Michel Th\' era\thanks{Mathematics and Computer Science Department, University of Limoges, 123 Avenue Albert Thomas,
87060 Limoges CEDEX, France. E-mail: \texttt{michel.thera@unilim.fr}}
}


\maketitle

\begin{abstract}
\noindent {This paper is concerned with solving the problem of finding a zero of the sum of a single-valued operator and a set-valued operator, neither of which is assumed to be monotone, in a real Hilbert space. Under the framework of pair monotonicity, we propose a generalized forward--backward algorithm and a generalized Tseng  algorithm based on transformed and warped resolvents associated with an auxiliary linear operator. We establish weak, strong and linear convergence of the proposed algorithms under suitable assumptions involving pair strong monotonicity, pair cocoercivity, and pair Lipschitz continuity. Finally, numerical experiments are presented to validate the theoretical results and demonstrate the effectiveness of the proposed algorithms.
} 
\end{abstract}

\paragraph{Keywords:}
Pair monotonicity, 
nonconvex programming,  sum inclusion problem, forward-backward algorithm, Tseng-type algorithm,  transformed resolvent, warped resolvent.

\paragraph{Mathematics Subject Classification (MSC 2020):}
 49J52, 49J53.

\section{Introduction}
In this paper, we focus on the inclusion problem
\begin{equation}\label{main-pb}
0\in A(x)+B(x),
\end{equation}
where $\mathcal{H}$ is a real Hilbert space, $A:\mathcal{H}\to\mathcal{H}$ is single-valued, and $B:\mathcal{H}\rightrightarrows\mathcal{H}$ is set-valued. This fundamental formulation arises in a broad range of problems in optimization, variational analysis, equilibrium theory, and related areas. In particular, when $A$ and $B$ are maximally monotone, \eqref{main-pb} provides the underlying framework for numerous splitting and proximal methods, and has been extensively investigated from both analytical and computational perspectives; see, e.g., \cite{Apidopoulos,Attouch,Bauschke,Bauschke1,Chen,cp,cw,Davis,Gibali,Giselsson,Giselsson1,He,Hong,Lions,Moursi,Svaiter,Tseng}.

A substantially more challenging setting arises when monotonicity is absent. Many important models, particularly those originating in nonconvex optimization and equilibrium problems, lead naturally to inclusions in which neither operator is monotone. Our objective is to address precisely this setting by studying \eqref{main-pb} without imposing monotonicity assumptions on either $A$ or $B$. This departure from the classical framework is not merely technical: the loss of monotonicity fundamentally alters the geometry of the problem and, in general, rules out the direct use of standard splitting and proximal techniques. It therefore calls for new structural conditions that can recover, at least partially, the monotonicity mechanisms underlying the convergence of these methods.

A promising framework in this direction is provided by the notion of \emph{pair monotonicity}, originally introduced in \cite{acl} and subsequently developed in \cite{LDT,LMT}. Rather than requiring an operator to be monotone in its own right, pair monotonicity seeks to identify an auxiliary mapping that interacts with the given operator so as to restore a generalized monotonicity structure. This viewpoint is particularly well suited to nonmonotone inclusions, since it allows one to exploit a hidden monotonicity mechanism without imposing restrictive monotonicity assumptions on the original operator.

More precisely, pair monotonicity was used in \cite{acl} to study a broad class of quasi-variational inequalities (QVIs) and inclusions of the form
\begin{equation}\label{1oper}
0\in\mathcal{A}(x),
\end{equation}
where $\mathcal{A}:\mathcal{H}\rightrightarrows\mathcal{H}$ is not necessarily monotone. The key idea is to construct an auxiliary mapping $v:\mathcal{H}\to\mathcal{H}$ such that the pair $(\mathcal{A},v)$ satisfies an appropriate generalized monotonicity relation. In this framework, $v$ acts as a structural companion to $\mathcal{A}$ and is chosen to reflect the particular analytical properties of the problem. Thus, rather than modifying the original operator or imposing global monotonicity, one seeks to reveal a suitable monotonicity structure through the interaction between $\mathcal{A}$ and $v$.

This perspective leads naturally to algorithmic developments. When $\mathcal{A}$ is single-valued and $v$ is invertible, the pair-monotonicity framework yields a forward-type algorithm for solving \eqref{1oper}; see \cite{acl}. For set-valued operators, this approach was extended in \cite{LDT} through a generalized proximal point algorithm ($\mathbf{GPPA})$, thereby providing a mechanism for treating nonmonotone inclusions within a proximal framework. These results demonstrate that pair monotonicity can serve as a unifying structural principle for designing convergent algorithms beyond the classical monotone setting.

Problem \eqref{1oper} arises naturally in unconstrained optimization. Consider
$$
\min_{x\in\mathcal{H}} f(x),
$$
where $f:\mathcal{H}\to\mathbb{R}\cup\{+\infty\}$ is not necessarily convex. Under appropriate assumptions, a first-order necessary optimality condition can be expressed as
$$
0\in\partial f(x),
$$
where $\partial f$ denotes an appropriate generalized subdifferential, such as the Mordukhovich or Clarke subdifferential. A natural extension is the constrained optimization problem
$$
\min_{x\in C} f(x)
=\min_{x\in\mathcal{H}}\bigl(f(x)+I_C(x)\bigr),
$$
where $C\subset\mathcal{H}$ is a convex set and $I_C$ denotes its indicator function. Under suitable qualification conditions, a necessary optimality condition takes the form
\begin{equation}\label{constr}
0\in\partial f(x)+N_C(x),
\end{equation}
where $N_C$ denotes the normal cone mapping. Although $N_C$ is maximally monotone, the generalized subdifferential $\partial f$ is generally nonmonotone when $f$ is nonconvex. Nevertheless, the pair-monotonicity framework offers a way to recover a useful monotonicity structure: it may be possible to identify an auxiliary mapping $v$ such that both pairs $(\partial f,v)$ and $(N_C,v)$ are monotone. This observation provides a direct connection between pair monotonicity and constrained nonconvex optimization.

More generally, consider the composite minimization problem
$$
\min_{x\in\mathcal{H}}\bigl(f(x)+g(x)\bigr),
$$
where $f,g:\mathcal{H}\to\mathbb{R}\cup\{+\infty\}$. Under appropriate qualification conditions, its first-order optimality condition leads naturally to the inclusion \eqref{main-pb}, with $A=\partial f$ and $B=\partial g$. Thus, both constrained optimization and composite nonconvex optimization fit naturally within the framework of nonmonotone sum inclusions. This connection motivates our study of \eqref{constr} and, more generally, of \eqref{main-pb} through pair monotonicity.

Existing developments based on pair monotonicity have primarily focused on general nonmonotone inclusions and generalized proximal point methods; see \cite{LDT,LMT}. The present work takes a complementary step by explicitly exploiting the sum structure in \eqref{main-pb}. Rather than applying a proximal method to the inclusion as a whole, we develop splitting schemes that treat the single-valued operator $A$ by a forward step and the set-valued operator $B$ through a generalized resolvent. This distinction is particularly important for structured applications in which $A$ is explicitly computable, while $B$ possesses a resolvent that can be evaluated efficiently, as is the case for many normal cone and subdifferential operators. Our approach therefore extends the pair-monotonicity framework from generalized proximal methods to genuinely splitting-based algorithms for nonmonotone sum inclusions, while retaining the computational advantages associated with the separate treatment of $A$ and $B$.

Our first contribution is a generalized forward--backward algorithm based on a transformed resolvent of $B$ ($\mathbf{Algorithm~1}$). We establish linear convergence under two alternative pair-monotonicity conditions: either $(A,v)$ or $(B,v)$ is strongly monotone; see Theorems~\ref{thm1} and~\ref{thm2}, respectively. We further establish strong convergence under the assumption that {$(A,v)$ or $(v,A)$} is pair cocoercive  see Theorem~\ref{thm3}. These results provide convergence guarantees for a forward--backward scheme even though neither of the original operators is required to be monotone.

When strong monotonicity or cocoercivity is absent, the generalized forward–backward framework may no longer be directly applicable. To overcome this limitation, we introduce a generalized Tseng forward–backward–forward algorithm ($\mathbf{Algorithm 2}$) for computing approximate solutions. In particular, we allow $v$ to be nonsurjective, which provides greater flexibility in the choice of $v$ and thereby significantly broadens the range of potential applications.
Moreover, under the additional assumption that $v^{-1}$ is Lipschitz continuous, we prove weak convergence of the generated sequence; see Theorem \ref{thm4}. 

Taken together, these results demonstrate that pair monotonicity can be effectively combined with classical splitting principles to handle a substantially broader class of nonmonotone inclusions. In particular, the proposed framework separates the structural role of pair monotonicity from the computational roles of the two operators, thereby allowing convergence analysis and algorithm design to proceed without requiring either $A$ or $B$ to be monotone individually.

The remainder of the paper is organized as follows. Section \ref{sec2} reviews the notation and basic concepts related to pair monotonicity that will be used throughout the paper. In Section \ref{sec3}, we introduce the generalized forward--backward and generalized Tseng algorithms and establish their convergence properties. Section \ref{sec4} presents numerical experiments illustrating the theoretical results. Finally, Section \ref{sec5} concludes the paper with a summary of the main findings and directions for future research.

\section{{Preliminaries }}\label{sec2}

{
Throughout this paper, \( \mathcal{H} \) denotes a real Hilbert space endowed with the inner product \( \langle \cdot, \cdot \rangle \) and the induced norm \( \|\cdot\| \).  For
a set-valued operator \( F : \mathcal{H} \rightrightarrows \mathcal{H} \), the \emph{domain}, \emph{range}, \emph{graph}, \emph{set of zeros}, and \emph{set of fixed points} of \( F \) are defined, respectively, by }
\begin{align*}
\dom F &= \{x \in \mathcal{H} : F(x) \neq \varnothing\}, \\
\ran F &= \bigcup_{x \in \mathcal{H}} F(x), \\
\gra F &= \{(x, y) : x \in \mathcal{H}, y \in F(x)\}, \\
\zer F &= \{x \in \mathcal{H} : 0 \in F(x)\}, \\
\Fix F &= \{x \in \mathcal{H} : x \in F(x)\}.
\end{align*}  

The \emph{inverse} of \( F \) is defined by \( F^{-1}(y) = \{x \in \mathcal{H} : y \in F(x)\} \). It is easy to see that \( \dom F = \ran F^{-1} \) and \( \ran F = \dom F^{-1} \).  

The set-valued mapping \( F \) is said to be \emph{monotone} if for all \( (x, x^*), (y, y^*) \in \gra F \),
\[
\langle x^* - y^*, x - y \rangle \geq 0,
\]
and \emph{\(\alpha\)-strongly monotone} if \( \alpha \in (0, +\infty) \) and for all \( (x, x^*), (y, y^*) \in \gra F \),
\[
\langle x^* - y^*, x - y \rangle \geq \alpha \|x - y\|^2.
\] 

	Next, we  recall the concept of \emph{pair monotonicity}, introduced in \cite{acl}, which provides a natural extension of the classical notion of monotonicity, which is a useful tool in the analysis of non-monotone inclusion problems (see \cite{LDT,LMT}). Let \( F_1, F_2 : \mathcal{H} \rightrightarrows \mathcal{H} \) be two set-valued mappings, we define 

\begin{itemize}
\item
The pair \( (F_1, F_2) \) is \emph{monotone} if for all \( x, y \in \mathcal{H} \),
$$
\langle F_1(x) - F_1(y), F_2(x) - F_2(y) \rangle \geq 0,
$$
i.e., for all \( x, y \in \mathcal{H} \), \( x_1^* \in F_1(x) \), \( y_1^* \in F_1(y) \), \( x_2^* \in F_2(x) \), \( y_2^* \in F_2(y) \),
$$
\langle x_1^* - y_1^*, x_2^* - y_2^* \rangle \geq 0.
$$
\item
The pair \( (F_1, F_2) \) is \emph{\(\alpha\)-strongly monotone} if \( \alpha \in (0, +\infty) \) and for all \( x, y \in \mathcal{H} \),
$$
\langle F_1(x) - F_1(y), F_2(x) - F_2(y) \rangle \geq \alpha \|x - y\|^2,
$$
i.e., for all \( x, y \in \mathcal{H} \), \( x_1^* \in F_1(x) \), \( y_1^* \in F_1(y) \), \( x_2^* \in F_2(x) \), \( y_2^* \in F_2(y) \),
$$
\langle x_1^* - y_1^*, x_2^* - y_2^* \rangle \geq \alpha \|x - y\|^2.
$$
\end{itemize}  

\begin{remark}
While classical monotonicity compares an operator with the identity mapping ($F_2=Id$), pair monotonicity replaces the identity by a suitable auxiliary mapping, allowing certain non-monotone operators to inherit monotonicity-like properties.   	
\end{remark}  

\begin{definition}
The pair $(A,v)$ is called Lipschitz continuous if there exists $L>0$ such that 
$$
\Vert Ax-Ay\Vert\le L \Vert v(x)-v(y)\Vert.
$$
\end{definition}

\begin{definition}
The pair $(A,v)$ is called co-coercive  if there exists $\alpha>0$ such that 
$$
\langle Ax-Ay, v(x) - v(y) \rangle \geq \alpha \|Ax - Ay\|^2.
$$
\end{definition}

For a set-valued operator \( F : \mathcal{H} \rightrightarrows \mathcal{H} \) and a linear $v: \mathcal{H} \to \mathcal{H}$ with \( \dom v = \mathcal{H} \), we recall the following definitions. The \emph{transformed resolvent} of $F$ with respect to $v$ \cite{LDT} is defined by     
\[
T_{F}^v = v \circ (F + v)^{-1},
\] 
while the \emph{warped resolvent} of $F$ with kernel $v$ \cite{bc} is defined by
\[
J_F^v = (F + v)^{-1} \circ v,
\] 
provided that \( \ran v \subseteq \ran(F + v) \). These generalized resolvents will serve as the main tools in the design of the algorithms introduced in the next section.

We   now  recall the notion of \emph{$R$-continuity} for a set-valued mapping
$\mathcal{A}:\mathcal{H}\rightrightarrows\mathcal{H}$ at a point
$\bar{x}$ in its domain; see \cite{Le,LMT,LT}. This property is useful in
the convergence analysis of the algorithms developed below.

\begin{definition}\label{rdef}
The set-valued mapping \noindent $\mathcal{A}:\mathcal{H} \rightrightarrows \mathcal{H}$ is called {\sc R-continuous} at ${\bar x}\in\dom\,\mathcal{A}$ if there exist a number $\sigma>0$ and a nondecreasing function $\rho: \mathbb{R}^+\to \mathbb{R}^+$ satisfying $\lim_{r\to 0^+}\rho(r)=\rho(0)=0$ such that we have the inclusion
\begin{equation}\label{rcon}
\mathcal{A}(x) \subset \mathcal{A}({\bar x}  )+\rho(\Vert x-{\bar x}   \Vert)\ball\;\mbox{ for all }\;x\in  \ball({\bar x} ,\sigma)
\end{equation}
with the {\sc continuity modulus function} $\rho$ and {\sc radius} $\sigma$. When $\sigma=\infty$, $\mathcal{A}$ is  called {\sc globally R-continuous} at ${\bar x}$. The inclusion in \eqref{rcon} means that for each $y\in \mathcal{A}(x)$, there exists ${\bar y}  \in  \mathcal{A}({\bar x} )$ satisfying the estimate 
$$
\Vert y-{\bar y}  \Vert\le \rho(\Vert x-{\bar x}   \Vert)
$$ 
for all $x\in \ball({\bar x},\sigma)$.
\end{definition}

%

\begin{definition}\label{closed graph}
Let $\mathcal{A}:\mathcal{H}\rightrightarrows\mathcal{H}$ be a set-valued
mapping. We say that:

\begin{itemize}
    \item the graph of $\mathcal{A}$ is  \emph{sequentially   closed }at a  point 
    $\bar{x}\in\dom\mathcal{A}$ if, for any sequences
    $x_k\to\bar{x}$ and $y_k\in\mathcal{A}(x_k)$ with $y_k\to y$, one has
    $y\in\mathcal{A}(\bar{x})$;

    \item the graph of $\mathcal{A}$ is \emph{sequentially weak-to-strong
    closed} if, for any sequences $x_k\rightharpoonup x$ and
    $y_k\in\mathcal{A}(x_k)$ with $y_k\to y$, one has
    $y\in\mathcal{A}(x)$.
\end{itemize}
\end{definition}

\begin{theorem}\label{compact}
\cite{LMT1,LT}
Suppose that $\mathcal{A}:\mathcal{H}\rightrightarrows\mathcal{H}$ has a
closed graph at $0$ and is locally compact around $0$, in the sense that
there exists $\sigma>0$ such that
\[
\mathcal{A}\bigl(\sigma\mathbb{B}\setminus\{0\}\bigr)
\]
is contained in a compact subset of $\mathcal{H}$. Then $\mathcal{A}$ is
$R$-continuous at $0$.
\end{theorem}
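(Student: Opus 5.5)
We need to exhibit $\rho$ nondecreasing with $\rho(r)\to 0$ as $r\to0^+$, $\rho(0)=0$, and a radius $\sigma$, so that $\mathcal{A}(x)\subset \mathcal{A}(0)+\rho(\|x\|)\ball$ for $\|x\|\le\sigma$. The natural candidate is the excess function
\[
\rho_0(r)=\sup\Bigl\{ d\bigl(y,\mathcal{A}(0)\bigr) : y\in\mathcal{A}(x),\ 0<\|x\|\le r\Bigr\},\qquad \rho_0(0)=0,
\]
for $0<r\le\sigma$ (with $\sup\emptyset=0$), where $\sigma$ is the radius from the local compactness hypothesis. We extend it by $\rho_0(r)=\rho_0(\sigma)$ for $r>\sigma$. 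By construction $\rho_0$ is nondecreasing, since the set over which the supremum is taken grows with $r$. It is finite because $\mathcal{A}(\sigma\ball\setminus\{0\})$ lies in a compact, hence bounded, set $K$, and $\mathcal{A}(0)\neq\emptyset$ as $0\in\dom\mathcal{A}$.

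The inclusion with $\rho=\rho_0$ is almost immediate: for $y\in\mathcal{A}(x)$ with $0<\|x\|\le\sigma$ we have $d(y,\mathcal{A}(0))\le\rho_0(\|x\|)$. To obtain an actual point $\bar y$ we work with a compact set, as follows.
\begin{enumerate}[(i)]
\item Closedness of the graph at $0$ together with compactness shows that the relevant part of $\mathcal{A}(0)$ can be taken closed. Indeed, replace $\mathcal{A}(0)$ by $\overline{\mathcal{A}(0)}$ if necessary, noting that points of $\mathcal{A}(0)$ which are limits of values at $x_k\to0$ already belong to $\mathcal{A}(0)$.
\item If attainment of the distance causes trouble, we avoid it by using $2\rho_0$ in place of $\rho_0$ when $\rho_0(r)>0$.
\item For $x=0$ the inclusion is trivial.
\end{enumerate}

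The main step is proving $\lim_{r\to0^+}\rho_0(r)=0$. Suppose not. Since $\rho_0$ is monotone, there are $\varepsilon>0$, points $x_k\neq 0$ with $x_k\to0$, and $y_k\in\mathcal{A}(x_k)$ with $d(y_k,\mathcal{A}(0))\ge\varepsilon$. Since $y_k\in K$ and $K$ is compact, a subsequence satisfies $y_k\to y$. Closedness of the graph at $0$ gives $y\in\mathcal{A}(0)$, so $d(y_k,\mathcal{A}(0))\le\|y_k-y\|\to0$, a contradiction. This is the crux of the argument, and it is exactly where both hypotheses are used: compactness supplies the convergent subsequence, and graph closedness places its limit in $\mathcal{A}(0)$.

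The remaining obstacle is the technical point in (ii): the definition requires some $\bar y\in\mathcal{A}(0)$ with $\|y-\bar y\|\le\rho(\|x\|)$, not merely a distance bound. I would handle it by taking $\rho=2\rho_0$ and, when $\rho_0(\|x\|)=0$, observing that $d(y,\mathcal{A}(0))=0$. In that case $y$ is the limit of points of $\mathcal{A}(0)$. A sequence in $\mathcal{A}(0)\cap(K+\ball)$ converging to $y$ then lies in $\mathcal{A}(0)$, which is closed near $K$ by graph closedness applied to the constant sequence $x_k=0$. Hence $y\in\mathcal{A}(0)$ and we may take $\bar y=y$. With $\sigma$ as above, this yields $R$-continuity at $0$.
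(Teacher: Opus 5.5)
The paper does not prove this statement itself; it only cites the literature for it. Your argument is the standard one and it is correct: $\rho_0$ is nondecreasing and finite because $K$ is bounded, $\lim_{r\to0^+}\rho_0(r)=0$ follows from sequential compactness of $K$ together with graph closedness at $0$, and the factor $2$ correctly handles the fact that $d(y,\mathcal{A}(0))$ need not be attained in infinite dimensions.

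One tidy-up: drop item (i). You may not replace $\mathcal{A}(0)$ by its closure, since the definition requires $\bar y\in\mathcal{A}(0)$. The replacement is also unnecessary: graph closedness at $0$, applied with $x_k\equiv 0$, already shows that $\mathcal{A}(0)$ is closed outright, not merely ``near $K$''.
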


We finally recall the classical weak convergence lemma of Opial, which will be used to establish the weak convergence of the sequence generated by Algorithm~2.

\begin{lemma}[Opial's Lemma {\cite{Opial}}]\label{lem:Opial}
Let $S$ be a nonempty subset of $\mathcal{H}$, and let $(x_n)_{n\in\mathbb{N}}$ be a sequence in $\mathcal{H}$. Suppose that:
\begin{enumerate}[(i)]
    \item for every $x^*\in S$, the limit
    \[
    \lim_{n\to\infty}\|x_n-x^*\|
    \]
    exists;
     \item every sequential weak cluster point of $(x_n)_{n\in\mathbb{N}}$ belongs to $S$.
\end{enumerate}
Then $(x_n)_{n\in\mathbb{N}}$ converges weakly to some point $x^\infty\in S$.
\end{lemma}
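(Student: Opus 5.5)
The plan is the classical Opial argument. Let $(x_n)$ be a sequence satisfying (i) and (ii).

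\textbf{Step 1 (weak cluster points exist).} Fix some $x^*\in S$. By (i), $\|x_n-x^*\|$ converges, so $(x_n)$ is bounded. Since bounded sets in the Hilbert space $\mathcal{H}$ are weakly sequentially compact, $(x_n)$ has at least one weak sequential cluster point. By (ii), every such cluster point lies in $S$.

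\textbf{Step 2 (the cluster point is unique).} It suffices to show that there is only one weak cluster point; then the whole sequence converges weakly to it, since otherwise some subsequence would stay outside a weak neighbourhood of the point and, being bounded, would have another cluster point. Suppose $x_{k_n}\rightharpoonup u$ and $x_{m_n}\rightharpoonup w$. By (ii), $u,w\in S$, so by (i) the limits $\ell_u=\lim\|x_n-u\|$ and $\ell_w=\lim\|x_n-w\|$ exist. Expanding the squares gives
\[
\|x_n-u\|^2-\|x_n-w\|^2 = 2\langle x_n, w-u\rangle + \|u\|^2-\|w\|^2 .
\]
The left-hand side converges, so $\langle x_n, w-u\rangle$ has a limit $c$. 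Passing to the limit along the first subsequence gives $c=\langle u,w-u\rangle$, and along the second gives $c=\langle w,w-u\rangle$. Subtracting yields $\|w-u\|^2=0$, hence $u=w$.

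\textbf{Step 3 (conclusion).} The whole sequence converges weakly to the unique cluster point $x^\infty$, and $x^\infty\in S$ by (ii).

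The only delicate point is the subsequence argument that upgrades "unique weak cluster point" to "weak convergence of the whole sequence". This is handled by the contradiction in Step 2: a subsequence staying away from $x^\infty$ in the weak topology is still bounded, so it has a further weakly convergent subsequence, whose limit must equal $x^\infty$ by uniqueness — a contradiction.
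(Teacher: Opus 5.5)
Your proof is correct and complete. You get boundedness from (i), uniqueness of the weak cluster point from the identity $\|x_n-u\|^2-\|x_n-w\|^2=2\langle x_n,w-u\rangle+\|u\|^2-\|w\|^2$, and weak convergence of the whole sequence from the subsequence argument, which is the standard argument. The paper gives no proof of its own and simply cites Opial, whose original argument (through the Opial property $\liminf\|x_n-x\|<\liminf\|x_n-y\|$ when $x_n\rightharpoonup x\neq y$) rests on the same expansion of the norm, so your route is essentially the classical one.
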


\section{Main results}\label{sec3}

In this section, we study the convergence analysis of  algorithms that we propose to solve (\ref{main-pb}).   Throughout this section, we assume that the solution set
\[
S := \{x\in \mathcal{H} : 0\in A(x)+B(x)\},
\]
of \eqref{main-pb} is nonempty.

\medskip

\noindent\textbf{Assumption 1.} There exists a  mapping $v:\mathcal{H}\to\mathcal{H}$ such that the pairs $(A,v)$ and $(B,v)$ are monotone.

\medskip

\noindent\textbf{Assumption 2.} There exists $\gamma>0$ such that
\[
\operatorname{ran}(\gamma B+v)=\mathcal{H}.
\]

\medskip

Under Assumption 2, the transformed resolvent $T_{\gamma B}^{v}$ is well defined. Moreover, if the pair $(B,v)$ is monotone, then $T_{\gamma B}^{v}$ is firmly nonexpansive, i.e.,
\[
\|T_{\gamma B}^{v}(u)-T_{\gamma B}^{v}(w)\|^2 \le \langle T_{\gamma B}^{v}(u)-T_{\gamma B}^{v}(v),u-w\rangle,
\qquad \forall,u,w\in\mathcal{H}.
\]
Consequently, $T_{\gamma B}^{v}$ is  nonexpansive, i.e.,
\[
\|T_{\gamma B}^{v}(u)-T_{\gamma B}^{v}(w)\|^2 \le \|u-w\|,
\qquad \forall,u,w\in\mathcal{H}.
\]
This property is fundamental to our analysis, as it constitutes the key ingredient in the convergence analysis of Algorithm 1. We refer the reader to \cite{LDT} for the proof and further properties of transformed resolvents.  First  we investigate a generalized forward--backward scheme based on the transformed resolvent.

\medskip

\noindent\textbf{Algorithm 1.}
\[
x_{k+1}\in(\gamma B+v)^{-1}\bigl(v(x_k)-\gamma A(x_k)\bigr), \; k\ge 0,\; x_0\in \mathcal{H}.
\]

\medskip

The following theorems  establishes the strong convergence and  linear convergence for the sequence generated by Algorithm 1.

\begin{theorem}\label{thm1} 
Suppose that Assumptions 1 and 2 hold. If $(A,v)$ is $\alpha$-strongly monotone and $A$ is $L$-Lipschitz continuous, with
\[
0<\gamma<\frac{2\alpha}{L^2},
\]
then the sequence $(x_k)$ generated by Algorithm~1 converges to the unique solution $x^*$.

Moreover:
\begin{enumerate}
\item[(a)] If $v$ is Lipschitz continuous, then $(v(x_k))$ converges linearly to $v(x^*)$.
\item[(b)] If both $v$ and $v^{-1}$ are Lipschitz continuous, then $(x_k)$ converges linearly to $x^*$.
\end{enumerate}
\end{theorem}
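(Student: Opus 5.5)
The plan is to work in the transformed variables $y_k := v(x_k)$, in which Algorithm~1 becomes a fixed-point iteration of the firmly nonexpansive map $T := T_{\gamma B}^{v}$. By definition of the iteration,
\[
v(x_k)-\gamma A(x_k) \in \gamma B(x_{k+1}) + v(x_{k+1}),
\]
so $v(x_{k+1}) = T\bigl(v(x_k)-\gamma A(x_k)\bigr)$. Here $T$ is single-valued, since firm nonexpansiveness forces this. So $v(x_{k+1})$ is determined even if $x_{k+1}$ itself is selected from a set.

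\textbf{Step 1: existence and uniqueness.} For any $x^*\in S$ we have $-A(x^*)\in B(x^*)$. Hence $v(x^*)-\gamma A(x^*)\in(\gamma B+v)(x^*)$, that is,
\[
v(x^*) = T\bigl(v(x^*)-\gamma A(x^*)\bigr).
\]
For uniqueness, take two solutions $x^*, z^*$. Monotonicity of the pair $(B,v)$, applied to $-A(x^*)\in B(x^*)$ and $-A(z^*)\in B(z^*)$, gives $\langle A x^*-Az^*, v(x^*)-v(z^*)\rangle\le 0$. The $\alpha$-strong monotonicity of $(A,v)$ gives the same inner product $\ge \alpha\|x^*-z^*\|^2$. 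Together these force $x^*=z^*$.

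\textbf{Step 2: the key one-step estimate.} I would use nonexpansiveness of $T$ and expand the square:
\[
\|v(x_{k+1})-v(x^*)\|^2 \le \|v(x_k)-v(x^*)\|^2 - 2\gamma\langle Ax_k-Ax^*, v(x_k)-v(x^*)\rangle + \gamma^2\|Ax_k-Ax^*\|^2 .
\]
Applying strong monotonicity of $(A,v)$ to the middle term and $L$-Lipschitz continuity of $A$ to the last term yields
\[
\|v(x_{k+1})-v(x^*)\|^2 \le \|v(x_k)-v(x^*)\|^2 - c\,\|x_k-x^*\|^2, \qquad c := 2\gamma\alpha-\gamma^2L^2>0,
\]
where $c>0$ exactly because $\gamma<2\alpha/L^2$. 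Telescoping gives $\sum_k \|x_k-x^*\|^2 \le \|v(x_0)-v(x^*)\|^2/c<\infty$. Therefore $x_k\to x^*$ strongly, and no Lipschitz assumption on $v$ is needed for this part.

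\textbf{Step 3: linear rates.} For (a), let $v$ be $L_v$-Lipschitz. Then $\|x_k-x^*\|^2\ge \|v(x_k)-v(x^*)\|^2/L_v^2$, and Step~2 gives
\[
\|v(x_{k+1})-v(x^*)\|^2\le q\,\|v(x_k)-v(x^*)\|^2, \qquad q := 1-c/L_v^2 .
\]
The point I expect needs care is checking $q\in[0,1)$. The bound $q<1$ is immediate from $c>0$. For $q\ge 0$, combining strong monotonicity with Cauchy--Schwarz gives $\alpha\le LL_v$. Then
\[
L_v^2-2\gamma\alpha+\gamma^2L^2 \ge (L_v-\gamma L)^2 \ge 0,
\]
so $q\ge0$ and $(v(x_k))$ converges Q-linearly with rate $\sqrt q$. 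For (b), Lipschitz continuity of $v^{-1}$ with constant $\ell$ gives $\|x_k-x^*\|\le \ell\|v(x_k)-v(x^*)\|\le \ell\, q^{k/2}\|v(x_0)-v(x^*)\|$, which is R-linear convergence of $(x_k)$.

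The main obstacle is really only conceptual. One has to see that the whole analysis must be done in $v$-coordinates, where firm nonexpansiveness of $T_{\gamma B}^{v}$ (from \cite{LDT}) replaces the usual resolvent argument. Once that is done, the pair strong monotonicity of $(A,v)$ converts the decrease of $\|v(x_k)-v(x^*)\|$ into control of $\|x_k-x^*\|$ itself.
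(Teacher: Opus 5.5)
Your proposal is correct and follows the paper's proof. You pass to the $v$-coordinate recursion $v(x_{k+1})=T_{\gamma B}^{v}\bigl(v(x_k)-\gamma Ax_k\bigr)$, use nonexpansiveness of the transformed resolvent to get $\|v(x_{k+1})-v(x^*)\|^2\le\|v(x_k)-v(x^*)\|^2-(2\gamma\alpha-\gamma^2L^2)\|x_k-x^*\|^2$, and invoke Lipschitz continuity of $v$ and $v^{-1}$ for (a) and (b). You also supply the uniqueness argument and the check $q\in[0,1)$, which the paper leaves implicit.

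For (b), your R-linear bound $\|x_k-x^*\|\le \ell\, q^{k/2}\|v(x_0)-v(x^*)\|$ is the sound way to conclude. The paper's one-step chain instead produces the factor $L_v\widetilde L_v\sqrt{1-\beta/L_v^2}$, which need not be below $1$; it is about $3$ in the paper's first numerical example.
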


\begin{proof}
Since $(A,v)$ is $\alpha$-strongly monotone, the solution set $S$ has a unique point $x^*$.
{Since $v(x_{k+1})=T_{\gamma B}^v(v(x_k)-\gamma Ax_k)$ and $T_{\gamma B}^v$ is nonexpansive, we have
\baqn
\Vert v(x_{k+1})-v(x^*)\Vert^2&\le&\Vert v(x_{k})-v(x^*)-\gamma (Ax_k-Ax^*)\Vert^2\\
&=&\Vert v(x_{k})-v(x^*)\Vert^2-2\gamma\langle v(x_{k})-v(x^*)), Ax_k-Ax^*\rangle+\gamma^2 \Vert Ax_k-Ax^*\Vert^2\\
&\le&\Vert v(x_{k})-v(x^*)\Vert^2-2\gamma \alpha \Vert x_{k}-x^*\Vert^2+\gamma^2L^2\Vert x_{k}-x^*\Vert^2\\
&\le&\Vert v(x_{k})-v(x^*)\Vert^2-\beta \Vert x_{k}-x^*\Vert^2,
\eaqn
where
\[
\beta:=2\gamma\alpha-\gamma^2L^2>0.
\]
Hence, the sequence $\bigl(\|v(x_k)-v(x^*)\|\bigr)$ is monotonically decreasing and therefore convergent.  Consequently $$\|x_k-x^*\|\to0.$$
}

\noindent
 {
(a) Suppose that $v$ is Lipschitz continuous with modulus $L_v$. Then
\[
\|v(x_k)-v(x^*)\|
\le
L_v\|x_k-x^*\|.
\]
Substituting this estimate into the previous inequality gives
\[
\|v(x_{k+1})-v(x^*)\|^2
\le
\left(1-\frac{\beta}{L_v^2}\right)
\|v(x_k)-v(x^*)\|^2.
\]
Therefore, $(v(x_k))$ converges linearly to $v(x^*)$.
}
\medskip

\noindent
{(b) Assume, in addition, that $v^{-1}$ is Lipschitz continuous with modulus $\widetilde L_v$. Then
\[
\|x_{k+1}-x^*\|
\le
\widetilde L_v\|v(x_{k+1})-v(x^*)\|
\le
\widetilde L_v
\sqrt{1-\frac{\beta}{L_v^2}}
\|v(x_k)-v(x^*)\|
\le
L_v\widetilde L_v
\sqrt{1-\frac{\beta}{L_v^2}}
\|x_k-x^*\|.
\]
Hence, $(x_k)$ converges linearly to $x^*$.
}
\end{proof}


\begin{theorem}\label{thm2}
Suppose that Assumptions 1 and 2 hold. Assume that the pair $(B,v)$ is $\alpha$-strongly monotone,   the pair $(A,v)$ is $L$-Lipschitz continuous and $v$ is $L_v$-Lipschitz continuous for some $L, L_v>0$. Then the sequence $(v(x_k))$ generated by Algorithm 1 converges linearly to $v(x^*)$, provided that
\[
\gamma<\alpha L^{-2}L_v^{-2}.
\]
\end{theorem}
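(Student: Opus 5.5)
We want linear convergence of $v(x_k)$ when the strong monotonicity sits on $B$ instead of $A$. The natural tool is a contraction estimate for the transformed resolvent $T_{\gamma B}^v$ that exploits the $\alpha$-strong monotonicity of $(B,v)$, rather than mere firm nonexpansiveness. The forward step is then controlled only through the pair Lipschitz bound $\|Ax-Ay\|\le L\|v(x)-v(y)\|$, with no monotonicity of $(A,v)$ needed beyond Assumption~1.

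First, uniqueness and fixed-point characterization. The fixed-point relation is immediate: $x^*\in S$ iff $v(x^*)-\gamma Ax^*\in(\gamma B+v)(x^*)$, i.e. $v(x^*)=T_{\gamma B}^v(v(x^*)-\gamma Ax^*)$. I take $x^*$ to be any point of $S$ (nonempty by standing assumption). Uniqueness will then follow from the contraction below, since two solutions $x^*,y^*$ would give $\|v(x^*)-v(y^*)\|\le q\|v(x^*)-v(y^*)\|$ with $q<1$, hence $v(x^*)=v(y^*)$. Strong monotonicity of $(B,v)$ combined with the two inclusions then gives $x^*=y^*$.

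Second, the key resolvent estimate. Let $u,w\in\mathcal H$ and write $p=(\gamma B+v)^{-1}u$, $q=(\gamma B+v)^{-1}w$. Then $u-v(p)\in\gamma B(p)$ and $w-v(q)\in\gamma B(q)$, so by $\alpha$-strong monotonicity of $(B,v)$,
\[
\langle (u-w)-(v(p)-v(q)),\, v(p)-v(q)\rangle\ge\gamma\alpha\|p-q\|^2\ge \frac{\gamma\alpha}{L_v^2}\|v(p)-v(q)\|^2 ,
\]
where the last step uses the $L_v$-Lipschitz continuity of $v$. Cauchy--Schwarz then yields
\[
\Bigl(1+\frac{\gamma\alpha}{L_v^2}\Bigr)\|T_{\gamma B}^v u-T_{\gamma B}^v w\|\le\|u-w\|.
\]

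Third, combine with the forward step. Apply the estimate with $u=v(x_k)-\gamma Ax_k$ and $w=v(x^*)-\gamma Ax^*$. The triangle inequality and the pair Lipschitz property give $\|u-w\|\le(1+\gamma L)\|v(x_k)-v(x^*)\|$, so
\[
\|v(x_{k+1})-v(x^*)\|\le \frac{1+\gamma L}{1+\gamma\alpha L_v^{-2}}\|v(x_k)-v(x^*)\|.
\]
This ratio is $<1$ iff $L<\alpha L_v^{-2}$. That condition does not depend on $\gamma$ and is not the stated hypothesis, so this crude triangle inequality is the main obstacle.

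To get the stated condition I would instead square and expand $\|u-w\|^2=\|v(x_k)-v(x^*)\|^2-2\gamma\langle v(x_k)-v(x^*),Ax_k-Ax^*\rangle+\gamma^2\|Ax_k-Ax^*\|^2$. The cross term is nonnegative by monotonicity of $(A,v)$ from Assumption~1, so it can be dropped, leaving $\|u-w\|^2\le(1+\gamma^2L^2)\|v(x_k)-v(x^*)\|^2$. This gives the contraction factor
\[
q^2=\frac{1+\gamma^2L^2}{(1+\gamma\alpha L_v^{-2})^2}.
\]
Since $(1+\gamma\alpha L_v^{-2})^2>1+2\gamma\alpha L_v^{-2}$, it suffices that $\gamma^2L^2\le 2\gamma\alpha L_v^{-2}$, i.e. $\gamma\le 2\alpha L^{-2}L_v^{-2}$. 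The stated bound $\gamma<\alpha L^{-2}L_v^{-2}$ is therefore more than enough, giving $q<1$ and linear convergence of $(v(x_k))$ to $v(x^*)$.
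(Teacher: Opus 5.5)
Your proof is correct and follows the same route as the paper: you combine the $(1+\gamma\alpha L_v^{-2})^{-1}$-contraction of $T_{\gamma B}^v$ with the expansion of the forward step, drop the cross term using monotonicity of $(A,v)$, and bound the remaining term by the pair Lipschitz estimate. The differences are minor: you derive the resolvent contraction yourself (the paper cites it) and you square the Lipschitz factor, giving the sharper ratio $\frac{1+\gamma^2L^2}{(1+\gamma\alpha L_v^{-2})^2}$, which shows the stated step-size bound is more than sufficient; the paper uses the unsquared factor, which is still valid but looser.
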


\begin{proof}
Since
\[
v(x_{k+1})=
T_{\gamma B}^{v}\bigl(v(x_k)-\gamma A(x_k)\bigr),
\]
and $T_{\gamma B}^{v}$ is
$\frac{1}{1+\alpha\gamma L_v^{-2}}$-Lipschitz continuous (see \cite{LDT}), it follows that
\baqn
\Vert v(x_{k+1})-v(x^*)\Vert^2&\le&\frac{1}{1+\alpha \gamma L_v^{-2}} \Vert v(x_{k})-v(x^*)-\gamma (Ax_k-Ax^*)\Vert^2\\
&=&\frac{1}{1+\alpha \gamma L_v^{-2}}(\Vert v(x_{k})-v(x^*)\Vert^2-2\gamma\langle v(x_{k})-v(x^*), Ax_k-Ax^*\rangle\\
&&\,+\gamma^2 \Vert Ax_k-Ax^*\Vert^2).
\eaqn   
Since $(A,v)$ is $L$-Lipschitz continuous, one has 
\[
\|Ax_k-Ax^*\|
\le
L\|v(x_k)-v(x^*)\|.
\]
Discarding the nonpositive cross term yields
\[
\|v(x_{k+1})-v(x^*)\|^2
\le
\frac{1+\gamma^2L^2}
{1+\alpha\gamma L_v^{-2}}
\|v(x_k)-v(x^*)\|^2=\kappa \|v(x_k)-v(x^*)\|^2
\]
where
\[
\kappa:=
\frac{1+\gamma^2L^2}
{1+\alpha\gamma L_v^{-2}}<1
\]
since
\[
\gamma<\alpha L^{-2}L_v^{-2}.
\]
The conclusion follows.
\end{proof}
%

%

\begin{theorem}\label{thm3}
Suppose that Assumptions~1 and~2 are satisfied.

\begin{enumerate}[(a)]
\item If $(A,v)$ is $\alpha$-cocoercive and $\gamma<2\alpha$, then the
sequence $(A(x_k))$ generated by Algorithm~1 converges to $A(x^*)$.
Moreover, if $A^{-1}$ is single-valued and continuous, then $(x_k)$
converges to $x^*$.

\item If $(v,A)$ is $\alpha$-cocoercive, $(A,v)$ is
$L$-Lipschitz continuous, and $\gamma<2\alpha/L^2$, then the sequence
$(v(x_k))$ generated by Algorithm~1 converges linearly to $v(x^*)$.
Moreover, if $v^{-1}$ is single-valued and Lipschitz continuous, then
$(x_k)$ converges linearly to $x^*$.
\end{enumerate}
\end{theorem}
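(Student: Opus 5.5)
Both parts follow the pattern of Theorems~\ref{thm1} and~\ref{thm2}. Fix $x^*\in S$. Then $0\in A(x^*)+B(x^*)$ gives $v(x^*)-\gamma A(x^*)\in(\gamma B+v)(x^*)$, so $v(x^*)=T_{\gamma B}^{v}\bigl(v(x^*)-\gamma A(x^*)\bigr)$. Since $v(x_{k+1})=T_{\gamma B}^{v}(v(x_k)-\gamma A(x_k))$ and $T_{\gamma B}^v$ is nonexpansive (Assumptions~1 and~2), we start from
\[
\|v(x_{k+1})-v(x^*)\|^2\le \|v(x_k)-v(x^*)\|^2-2\gamma\langle v(x_k)-v(x^*),Ax_k-Ax^*\rangle+\gamma^2\|Ax_k-Ax^*\|^2 .
\]
We then bound the cross term using the appropriate cocoercivity assumption.

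\emph{Part (a).} Cocoercivity of $(A,v)$ gives $\langle v(x_k)-v(x^*),Ax_k-Ax^*\rangle\ge\alpha\|Ax_k-Ax^*\|^2$. Hence
\[
\|v(x_{k+1})-v(x^*)\|^2\le\|v(x_k)-v(x^*)\|^2-\gamma(2\alpha-\gamma)\|Ax_k-Ax^*\|^2 .
\]
Since $\gamma<2\alpha$, the sequence $\|v(x_k)-v(x^*)\|$ is nonincreasing and bounded below, so it converges. Telescoping gives $\sum_k\|Ax_k-Ax^*\|^2<\infty$, and therefore $Ax_k\to Ax^*$. If in addition $A^{-1}$ is single-valued and continuous, then $x_k=A^{-1}(Ax_k)\to A^{-1}(Ax^*)=x^*$.

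One subtlety: $x^*$ was an arbitrary element of $S$, and $A(x^*)$ must be well defined. For any two solutions $x^*,y^*$, applying the same inequality to each shows $Ax_k\to Ax^*$ and $Ax_k\to Ay^*$, so $A$ is constant on $S$. Under the invertibility hypothesis this forces $S$ to be a singleton, which justifies the notation $x^*$ in the statement.

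\emph{Part (b).} Cocoercivity of $(v,A)$ gives $\langle v(x_k)-v(x^*),Ax_k-Ax^*\rangle\ge\alpha\|v(x_k)-v(x^*)\|^2$. The Lipschitz property of the pair $(A,v)$ gives $\|Ax_k-Ax^*\|\le L\|v(x_k)-v(x^*)\|$. Substituting both,
\[
\|v(x_{k+1})-v(x^*)\|^2\le(1-2\gamma\alpha+\gamma^2L^2)\|v(x_k)-v(x^*)\|^2 .
\]
We must check that $q:=1-2\gamma\alpha+\gamma^2L^2$ lies in $[0,1)$. Positivity $q>0$ follows from $\gamma>0$ and $\alpha\le L$, which holds by Cauchy--Schwarz: $\alpha\|v(x)-v(y)\|^2\le\|v(x)-v(y)\|\,\|Ax-Ay\|\le L\|v(x)-v(y)\|^2$. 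The bound $q<1$ is exactly $\gamma<2\alpha/L^2$. This gives linear convergence of $v(x_k)$ to $v(x^*)$. If $v^{-1}$ is single-valued and $\widetilde L_v$-Lipschitz, then $\|x_k-x^*\|\le\widetilde L_v\|v(x_k)-v(x^*)\|\le\widetilde L_v q^{k/2}\|v(x_0)-v(x^*)\|$, which is R-linear convergence of $(x_k)$ to $x^*$. This estimate also shows the solution is unique whenever $v$ is injective.

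The main obstacle is not the estimates, which are routine. It is the bookkeeping in (a): identifying the limit of $(Ax_k)$ without assuming a unique solution, and checking that the fixed-point identity for $x^*$ holds for the set-valued $B$. I would handle both exactly as sketched above.
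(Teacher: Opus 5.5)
Your proof is correct and follows the paper's argument essentially step for step: nonexpansiveness of $T_{\gamma B}^{v}$ applied to $v(x_{k+1})=T_{\gamma B}^{v}(v(x_k)-\gamma A(x_k))$, then the cocoercivity bound on the cross term, which gives the decrease $\gamma(2\alpha-\gamma)\|Ax_k-Ax^*\|^2$ in (a), and in (b) the additional pair-Lipschitz bound, which gives the contraction factor $1-\gamma(2\alpha-\gamma L^2)$. Your extra points are sound refinements that the paper leaves implicit: the fixed-point identity for $x^*$, the observation that $A$ is constant on $S$ (so $A(x^*)$ is well defined), and the check that the contraction factor is nonnegative; the one slip is that $\alpha\le L$ gives $q\ge(1-\gamma L)^2\ge 0$ rather than $q>0$ (for instance $A=v$ with $\gamma=1$ gives $q=0$), but $q\in[0,1)$ is all your argument needs.
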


\begin{proof}
a) Since $T_{\gamma B}^v$ is nonexpansive, one has 
\baqn
\Vert v(x_{k+1})-v(x^*)\Vert^2&\le&\Vert v(x_{k})-v(x^*)-\gamma (Ax_k-Ax^*)\Vert^2\\
&=&\Vert v(x_{k})-v(x^*)\Vert^2-2\gamma\langle v(x_{k})-v(x^*)), Ax_k-Ax^*\rangle+\gamma^2 \Vert Ax_k-Ax^*\Vert^2\\
&\le&\Vert v(x_{k})-v(x^*)\Vert^2-2\gamma \alpha \Vert Ax_k-Ax^*\Vert^2+\gamma^2\Vert Ax_k-Ax^*\Vert^2\\
&\le&\Vert v(x_{k})-v(x^*)\Vert^2-\beta \Vert Ax_k-Ax^*\Vert^2,
\eaqn
where $\beta=\gamma(2\alpha-\gamma)>0$.
Therefore $(\Vert v(x_{k+1})-v(x^*)\Vert)$ is decreasing, convergent and  $(\Vert Ax_k-Ax^*\Vert)$ converges to $0$. The continuity of  $A^{-1}$ implies the convergence of the sequence $(x_k)$ to $x^*$.\\

(b) Similarly, using the nonexpansiveness of $T_{\gamma B}^v$, we obtain
\begin{align*}
\|v(x_{k+1})-v(x^*)\|^2
&\leq
\|v(x_k)-v(x^*)\|^2
-2\gamma\langle v(x_k)-v(x^*),Ax_k-Ax^*\rangle
+\gamma^2\|Ax_k-Ax^*)\|^2\\
&\leq
\|v(x_k)-v(x^*)\|^2
-2\gamma\alpha\|v(x_k)-v(x^*)\|^2
+\gamma^2L^2\|v(x_k)-v(x^*)\|^2\\
&=
\bigl(1-\gamma(2\alpha-\gamma L^2)\bigr)
\|v(x_k)-v(x^*)\|^2\\
&=(1-\kappa)\|v(x_k)-v(x^*)\|^2,
\end{align*}
where
\[
\kappa:=\gamma(2\alpha-\gamma L^2)>0.
\]
Since $\gamma<2\alpha/L^2$, we also have $0<\kappa<1$. Hence,
\[
\|v(x_k)-v(x^*)\|
\leq
(1-\kappa)^{k/2}\|v(x_0)-v(x^*)\|,
\]
which proves the linear convergence of $(v(x_k))$ to $v(x^*)$.
If, in addition, $v^{-1}$ is single-valued and Lipschitz continuous with
Lipschitz modulus $\widetilde L_v$, then
\[
\|x_k-x^*\|
\leq
\widetilde L_v\|v(x_k)-v(x^*)\|
\leq
\widetilde L_v(1-\kappa)^{k/2}
\|v(x_0)-v(x^*)\|,
\]
and therefore $(x_k)$ converges linearly to $x^*$.
\end{proof}

Algorithm 1 relies on strong monotonicity or cocoercivity and on the full range of $v$, which may be restrictive in certain applications. The following weaker assumption allows us to construct a generalized Tseng forward–backward–forward algorithm that remains applicable under weaker conditions and provides approximate solutions to the inclusion problem.

%
%
\noindent \textbf{Assumption 3.} The mapping $v:\mathcal{H}\to\mathcal{H}$ is linear 
and $\ran  A\subset  \ran v\subset \ran (\gamma B+v)$. \\

{Under Assumption 3, the mappings $v^{-1}A$ and $J_{\gamma B}^v$ are well defined. This allows us to introduce the following Tseng-type forward-backward algorithm.}\\

\noindent {\bf Algorithm 2}.
\[
\begin{cases}
y_{k}\in (\gamma B+v)^{-1}(v(x_k)-\gamma Ax_k)=J^v_{\gamma B}(x_k-\gamma v^{-1}(Ax_k)),\\
x_{k+1}\in y_k+\gamma v^{-1}(Ax_k-Ay_k), \;k\ge 0, \;x_0\in \mathcal{H}.
\end{cases}
\]

\medskip
\begin{theorem}\label{thm4}
Suppose that Assumptions 1 and 3 hold, and that the pair $(A,v)$ is $L$-Lipschitz continuous for some $L>0$ satisfying
$$
\gamma<\frac{1}{L}.
$$
Let $(x_k)$ and $(y_k)$ be the sequences generated by Algorithm 2. Then the following assertions hold:
\begin{enumerate}
\item
$
\|v(y_k)-v(x_k)\|\to0\qquad\text{as }k\to\infty.
$

\item If $y^*$ is a weak cluster point of $(y_k)$ and the graph of $A+B$ is sequentially weak-to-strong closed, then $y^*\in S$. Moreover, if $v^{-1}:\operatorname{ran}v\to\mathcal{H}$ is Lipschitz continuous, then the sequence $(x_k)$ generated by Algorithm 2 converges weakly to a point in $S$.

\item If $(A+B)^{-1}$ is $R$-continuous at $0$, then
$
d(y_k,S)\to0.
$
\end{enumerate}
\end{theorem}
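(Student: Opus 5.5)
The plan is to mimic Tseng's classical argument, carried out entirely in the ``$v$-image'' space. Fix $x^*\in S$, so that $-\gamma Ax^*\in\gamma B(x^*)$. Since $v$ is linear, the steps of Algorithm~2 read
\[
v(x_k)-\gamma Ax_k\in(\gamma B+v)(y_k),\qquad v(x_{k+1})=v(y_k)+\gamma(Ax_k-Ay_k).
\]
The first relation gives $v(x_k)-v(y_k)-\gamma Ax_k\in\gamma B(y_k)$. Monotonicity of the pair $(B,v)$ at $(y_k,x^*)$ then yields
\[
\langle v(x_k)-v(y_k)-\gamma(Ax_k-Ax^*),\,v(y_k)-v(x^*)\rangle\ge0 .
\]
Combining this with monotonicity of $(A,v)$, namely $\langle Ay_k-Ax^*,v(y_k)-v(x^*)\rangle\ge0$, I obtain
\[
\langle v(x_k)-v(y_k)-\gamma(Ax_k-Ay_k),\,v(y_k)-v(x^*)\rangle\ge0 .
\]
Next I expand $\|v(x_{k+1})-v(x^*)\|^2=\|v(y_k)-v(x^*)+\gamma(Ax_k-Ay_k)\|^2$ and use the standard identity $2\langle a-b,b-c\rangle=\|a-c\|^2-\|a-b\|^2-\|b-c\|^2$ with $a=v(x_k)$, $b=v(y_k)$, $c=v(x^*)$. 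Together with the pair Lipschitz bound $\|Ax_k-Ay_k\|\le L\|v(x_k)-v(y_k)\|$, this should give the Fej\'er-type estimate
\[
\|v(x_{k+1})-v(x^*)\|^2\le\|v(x_k)-v(x^*)\|^2-(1-\gamma^2L^2)\|v(x_k)-v(y_k)\|^2 .
\]
Since $\gamma L<1$, the sequence $\|v(x_k)-v(x^*)\|$ is nonincreasing and hence convergent, and summing the estimate gives $\sum_k\|v(x_k)-v(y_k)\|^2<\infty$. This proves assertion 1.

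For assertion 2, I set
\[
u_k:=\gamma^{-1}\bigl(v(x_k)-v(y_k)\bigr)-(Ax_k-Ay_k)\in(A+B)(y_k).
\]
By assertion 1 and the pair Lipschitz bound, $\|u_k\|\le(\gamma^{-1}+L)\|v(x_k)-v(y_k)\|\to0$. If $y_{k_j}\rightharpoonup y^*$, the sequential weak-to-strong closedness of $\gra(A+B)$ gives $0\in(A+B)(y^*)$, that is, $y^*\in S$.

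For weak convergence of $(x_k)$, assume $v^{-1}$ is Lipschitz on $\ran v$ with modulus $\widetilde L_v$. Then $\|x_k-y_k\|\le\widetilde L_v\|v(x_k)-v(y_k)\|\to0$, so $(x_k)$ and $(y_k)$ have the same weak cluster points, and all of them lie in $S$. Boundedness of $(x_k)$ follows from boundedness of $(v(x_k))$ via the Lipschitz property of $v^{-1}$, so weak cluster points exist. The main obstacle is Opial's Lemma~\ref{lem:Opial}: it requires convergence of $\|x_k-x^*\|$, whereas the Fej\'er estimate only controls $\|v(x_k)-v(x^*)\|$.

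I would try to get around this by applying Opial in the image space. Weak convergence is preserved by the bounded linear map $v$, so every weak cluster point of $(v(x_k))$ has the form $v(y^*)$ with $y^*\in S$. Opial, applied to the sequence $(v(x_k))$ and the set $v(S)$, then gives $v(x_k)\rightharpoonup v(\bar x)$ for some $\bar x\in S$. To pull this back, I would use that $v^{-1}$ is a Lipschitz linear map on $\ran v$: for any $w$, $\langle x_k-\bar x,w\rangle$ must be expressed through $v(x_k)-v(\bar x)$. The cleanest route is to note that any two weak cluster points $p,q$ of $(x_k)$ satisfy $v(p)=v(q)=v(\bar x)$, and that injectivity of $v$ (implicit in $v^{-1}$ being single-valued) then forces $p=q$. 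Boundedness of $(x_k)$ then gives weak convergence of the whole sequence. If this step resists, I would instead impose that $v$ is self-adjoint positive and run Opial in the $v$-induced norm.

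For assertion 3, I use $u_k\in(A+B)(y_k)$ with $u_k\to0$, so $y_k\in(A+B)^{-1}(u_k)$. For $k$ large, $\|u_k\|<\sigma$, and $R$-continuity of $(A+B)^{-1}$ at $0$ gives
\[
y_k\in(A+B)^{-1}(0)+\rho(\|u_k\|)\ball=S+\rho(\|u_k\|)\ball .
\]
Hence $d(y_k,S)\le\rho(\|u_k\|)\to0$, as required.
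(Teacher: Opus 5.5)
Your proofs of assertion 1, of the first half of assertion 2, and of assertion 3 follow the paper. You use the same combination of pair monotonicity of $(B,v)$ and $(A,v)$ with $v(x_{k+1})=v(y_k)+\gamma(Ax_k-Ay_k)$, the same Fej\'er inequality with constant $1-\gamma^2L^2$, and the same residual $u_k$ (up to sign). The $R$-continuity step is also the same, and you are more careful than the paper in requiring $\|u_k\|<\sigma$.

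The genuine difference is the weak convergence of $(x_k)$. The paper claims that Lipschitz continuity of $v^{-1}$ turns convergence of $\|v(x_k)-v(x^*)\|$ into convergence of $\|x_k-x^*\|$, and then applies Opial to $(x_k)$ directly. As a general implication this holds only when $v$ is a multiple of an isometry. For $v=\mathrm{diag}(1,2)$ on $\R^2$, a sequence alternating between $(1,0)$ and $(0,1/2)$ has $\|v(z_k)\|\equiv 1$ while $\|z_k\|$ oscillates. So the obstacle you flagged is real, and your detour is what actually proves the claim. You apply Opial to $(v(x_k))$ and $v(S)$: hypothesis (i) is your Fej\'er estimate, and hypothesis (ii) follows from boundedness of $(x_k)$, $\|x_k-y_k\|\to0$ and the first half of (ii). You then pull back through injectivity.

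The price is that you use that $v$ is injective and also \emph{bounded}, so that $x_{k_j}\rightharpoonup p$ gives $v(x_{k_j})\rightharpoonup v(p)$. Assumption 3 only says ``linear'', so you should state boundedness explicitly. With these two properties your fallback needs no self-adjointness or positivity of $v$. The form $\langle x,y\rangle_v:=\langle v(x),v(y)\rangle$ defines a norm equivalent to $\|\cdot\|$, since $\|v(x)\|\le\|v\|\,\|x\|$ and $\|x\|\le\widetilde L_v\|v(x)\|$. It therefore has the same weak topology, and Opial in $(\mathcal H,\langle\cdot,\cdot\rangle_v)$ gives the weak convergence in one step.
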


%
%
%
%
\begin{proof}
(i) Let $x^*\in S$. Then
\begin{equation}\label{discr}
\frac{v(y_k)-v(x_k)}{\gamma}+A(x_k)\in -B(y_k),
\end{equation}
and
\[
A(x^*)\in -B(x^*).
\]
Since $(B,v)$ is monotone, it follows that
\[
\left\langle
\frac{v(y_k)-v(x_k)}{\gamma}+A(x_k)-A(x^*),
v(y_k)-v(x^*)
\right\rangle
\le0.
\]
Combining this inequality with the monotonicity of $(A,v)$, we obtain
\begin{equation}\label{eq0}
\left\langle
\frac{v(y_k)-v(x_k)}{\gamma}+A(x_k)-A(y_k)
,v(y_k)-v(x^*)
\right\rangle
\le0.
\end{equation}
On the other hand, Algorithm~2 yields
\begin{equation}\label{eq00}
A(x_k)-A(y_k)=
\frac{1}{\gamma}\bigl(v(x_{k+1})-v(y_k)\bigr).
\end{equation}
Substituting \eqref{eq00} into \eqref{eq0} and using $\gamma>0$, we obtain
\[
\langle
v(x_{k+1})-v(x_k)
,v(y_k)-v(x^*)
\rangle
\le0.
\]
Hence,
\baqn
	&&\langle v(x_{k+1})-v(x^*), v(y_k)-v(x^*)\rangle\le \langle v(x_{k})-v(x^*),\, v(y_k)-v(x^*)\rangle\\
	&\Leftrightarrow& \Vert v(x_{k+1})-v(x^*)\Vert^2+\Vert v(y_k)-v(x^*)\Vert^2-\Vert v(x_{k+1})-v(y_k)\Vert^2\\
	&\le&\Vert v(x_{k})-v(x^*)\Vert^2+\Vert v(y_k)-v(x^*)\Vert^2-\Vert v(x_{k})-v(y_k)\Vert^2\\
	&\Leftrightarrow& \Vert v(x_{k+1})-v(x^*)\Vert^2\le \Vert v(x_{k})-v(x^*)\Vert^2+\gamma^2 \Vert Ax_k-Ay_k\Vert^2-\Vert v(x_{k})-v(y_k)\Vert^2,
	\eaqn
where the last equality follows from \eqref{eq00}.

Since $(A,v)$ is $L$-Lipschitz continuous and
$\gamma^2L^2<1,
$ there exists $\varepsilon>0$ such that
$
\gamma^2L^2\le1-\varepsilon.
$
Therefore,
\[
\|v(x_{k+1})-v(x^*)\|^2
\le
\|v(x_k)-v(x^*)\|^2
-\varepsilon\|v(x_k)-v(y_k)\|^2.
\]
It follows that the sequence
$(\Vert v(x_{k+1})-v(x^*)\Vert)$  is nonincreasing and hence convergent.  Moreover,
\[
\|v(x_k)-v(y_k)\|\longrightarrow0.
\]

\medskip

\noindent
(ii) Let $y^*$ be a weak cluster point of $(y_k)$. By \eqref{discr},
\begin{equation}\label{discr1}
\frac{v(y_k)-v(x_k)}{\gamma}
+A(x_k)-A(y_k)
\in
-(A+B)(y_k).
\end{equation}
Since
\[
\|v(x_k)-v(y_k)\|\to0,
\]
and $(A,v)$ is $L$-Lipschitz continuous, it follows that
\[
\frac{v(y_k)-v(x_k)}{\gamma}
+A(x_k)-A(y_k)
\longrightarrow0.
\]
Using the assumption that the graph of $A+B$ is sequentially weak-to-strong closed, we conclude that
\begin{equation} \label{belong}
0\in A(y^*)+B(y^*),
\end{equation}
that is,
$
y^*\in S.
$

\medskip
Now suppose that $v^{-1}:\operatorname{ran}v\to\mathcal{H}$ is Lipschitz continuous. Since the sequence
$$
\bigl(\|v(x_{k+1})-v(x^*)\|\bigr)
$$
converges and
$$
\|v(x_k)-v(y_k)\|\to0,
$$
the Lipschitz continuity of $v^{-1}$ implies that
$
\bigl(\|x_{k+1}-x^*\|\bigr)
$
also converges and that
$
\|x_k-y_k\| \to 0.
$
Let $x^*$ be a weak cluster point of $(x_k)$. Since $\|x_k-y_k\|\to0$, $x^*$ is also a weak cluster point of $(y_k)$. By \eqref{belong}, it follows that $x^*\in S$. Finally, Opial's lemma implies that $(x_k)$ converges weakly to some point in $S$.

\noindent
(iii) Define
\[
u_k:=
\frac{v(y_k)-v(x_k)}{\gamma}
+A(x_k)-A(y_k).
\]
Then $u_k\to0$. By \eqref{discr1},
\[
y_k\in(A+B)^{-1}(-u_k).
\]
Since $(A+B)^{-1}$ is $R$-continuous at $0$, we have
\[
(A+B)^{-1}(-u_k)
\subset
(A+B)^{-1}(0)+\rho(\|u_k\|),
\]
where $\rho(t)\to0$ as $t\to0$. Consequently,
\[
y_k
\in
S+\rho(\|u_k\|),
\]
which implies that
\[
d(y_k,S)\longrightarrow0.
\]
This completes the proof.
\end{proof}

\begin{remark}
	\label{rem:3.5}
	\rm(a) At the end of (ii) in Theorem \ref{thm4}, if
	$v^{-1}:\operatorname{ran}v\to \mathcal{H}$ is not single-valued and Lipschitz continuous, {but there exists a single-valued Lipschitz continuous selection $\widetilde v^{-1}$} , then $v^{-1}$ can be replaced by $\widetilde v^{-1}$ and the same conclusion follows.
	
	\rm(b) If the range condition
	\[
	\operatorname{ran} A\subset \operatorname{ran}v
	\subset \operatorname{ran}(\gamma B+v)
	\]
	is not satisfied, one may consider an alternative decomposition
	\[
	A+B=A'+B'
	\]
	such that, for a suitable choice of $v$, the corresponding range condition
	\[
	\operatorname{ran} A'\subset \operatorname{ran}v
	\subset \operatorname{ran}(\gamma B'+v)
	\]
	holds. This provides additional flexibility in applying the generalized framework.
\end{remark}

\section{Numerical Examples}\label{sec4}

\begin{example}
First we consider the operators $A: \R^3\to \R^3, B: \R^3 \rightrightarrows \R^3$ defined by
$$
A(x)=Qx+b, \qquad B(x)= N_{\R^3_+}(x),
$$
where
$$
Q =\begin{bmatrix} 1 & -3 & 0 \\ 0 & 2 & 0 \\ 0 & 0 & 1 \end{bmatrix},  \quad
b = \begin{bmatrix} -1 \\ 1 \\ -1 \end{bmatrix}.
$$
\medskip
We choose 
$$
v=\begin{bmatrix} 1 & 0 & 0\\ 0 & 3 & 0 \\ 0 & 0 & 3\end{bmatrix}.
$$
Then $A$ is non-monotone,  $v$ is invertible, $(B,v)$ is monotone, $(A,v)$ is $\alpha$-strongly monotone with $\alpha=\frac{7-\sqrt{34}}{2}\approx0.58$ and $A$ is $L$-Lipschitz continuous with {$L\approx3.71.$} Moreover, both $v$ and $v^{-1}$ are Lipschitz continuous. Hence all assumptions of Theorem \ref{thm1} are satisfied, and the convergence result follows. 
A direct computation yields the unique solution $x^*= (1,0,1)$.
\medskip

\noindent

To illustrate the convergence behavior of Algorithm 1, we compute the errors $\|x_k-x^*\|_2$ and $\|v(x_k)-v(x^*)\|_2$. Starting from $x_0=(4,3,5)$ and using the admissible stepsize $\gamma=0.06$,  the corresponding convergence behavior is displayed in Figure \ref{error1}.
\begin{figure}[H]
	\centering{\includegraphics[scale=0.35]{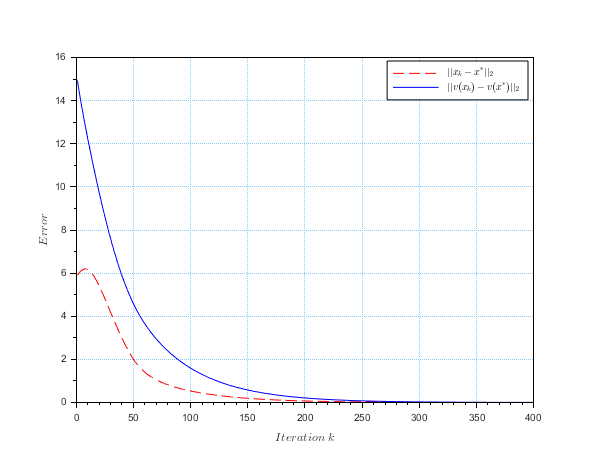}}
	\caption{\centering {\small Iterate errors $\|x_k-x^*\|_2$ and $\|v(x_k)-v(x^*)\|_2$ for $\gamma=0.06$ and $x_0=(4,3,5)$.}}
	\label{error1}
\end{figure}

Figure \ref{error2} further illustrates the effect of the stepsize and the initial point. In the left panel, starting from $x_0=(4,3,5)$, we plot the error $\|x_k-x^*\|_2$ for different values of $\gamma$, while right panel shows its evolution for $\gamma=0.06$ and various initial points. 

\begin{figure}[H]
	\centering{\includegraphics[scale=0.35]{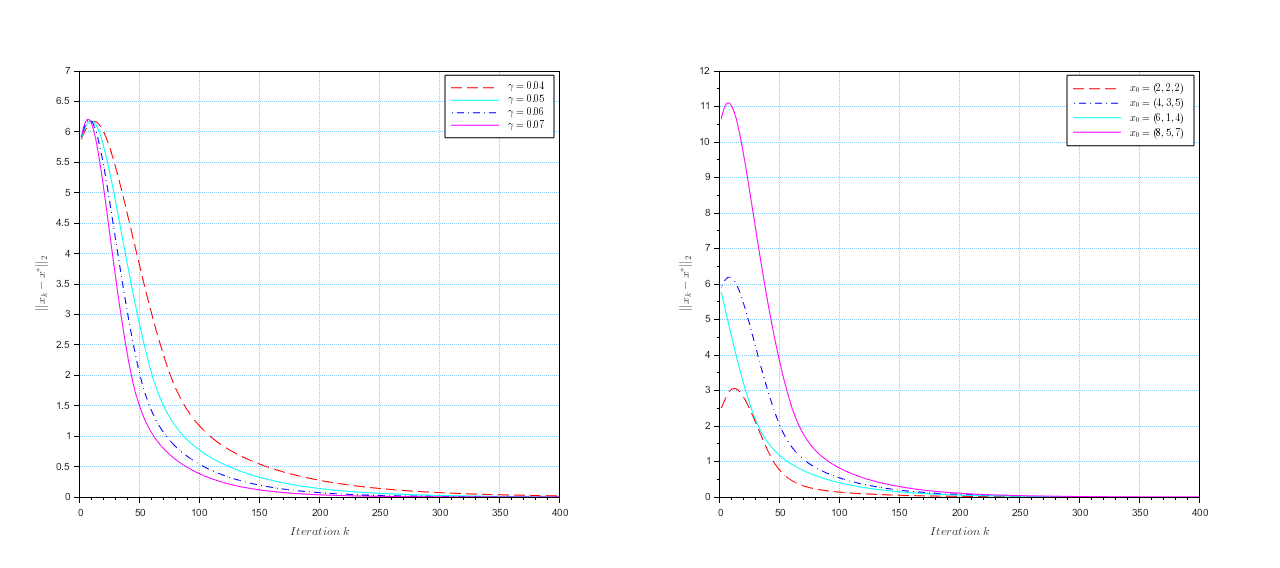}}
	\caption{\centering {\small Iterate error $\|x_k-x^*\|_2$ for various values of $\gamma$ and initial points.}}
	\label{error2}
\end{figure}
\end{example}

\begin{example}
Next we consider the operators $A: \R^3\to \R^3, B: \R^3 \rightrightarrows \R^3$ defined by
$$
A(x)=Qx+b, \qquad B(x)= N_{\R_+}(x_1)\times N_{\R_+}(x_2)\times \{N_{\R_+}(x_3)+x_3+2x_1-3x_2\},
$$
where
$$
Q =\begin{bmatrix} 1 & -3 & 0 \\ 0 & 2 & 0 \\ 0 & 0 & 0 \end{bmatrix},  \quad
b = \begin{bmatrix} -1 \\ 1 \\ 0 \end{bmatrix}.
$$
We choose 
$$
v=\begin{bmatrix} 1 & 0 & 0\\ 0 & 3 & 0 \\ 0 & 0 & 0\end{bmatrix}.
$$
Then $A$ and $B$ are non-monotone but  $(A,v)$ and $(B,v)$ are monotone and 
$$
\ran  A = {\R^2\times \{0\}= \ran v}\subset \ran (\gamma B+v)=\R^3.
$$
{Moreover $(A,v)$ is $L$-Lipschitz continuous with $L\approx 1.5$.}  Moreover, $A$ is continuous and $B$ has a closed graph, which implies that $\operatorname{gra}(A+B)$ is sequentially weak-to-strong closed. In addition,
$
S=(A+B)^{-1}(0)=\{x^*\},
\;
x^*=(1,0,0),
$
and {$(A+B)^{-1}$ is $R$-Lipschitz continuous at $0$ with Lipschitz constant $1$ and radius $\sigma=1/2$.}
Thus all assumptions of Theorem \ref{thm4} are satisfied.

Starting from the initial point $x_0=(2,1,1),$ the sequence generated by Algorithm 2 converges to $x^*$. Figure \ref{error3} illustrate this convergence for $\gamma=0.1,0.2,0.4,$ and $0.6$, all satisfying $\gamma<1/L$. 

\begin{figure}[H]
	\centering{\includegraphics[scale=0.35]{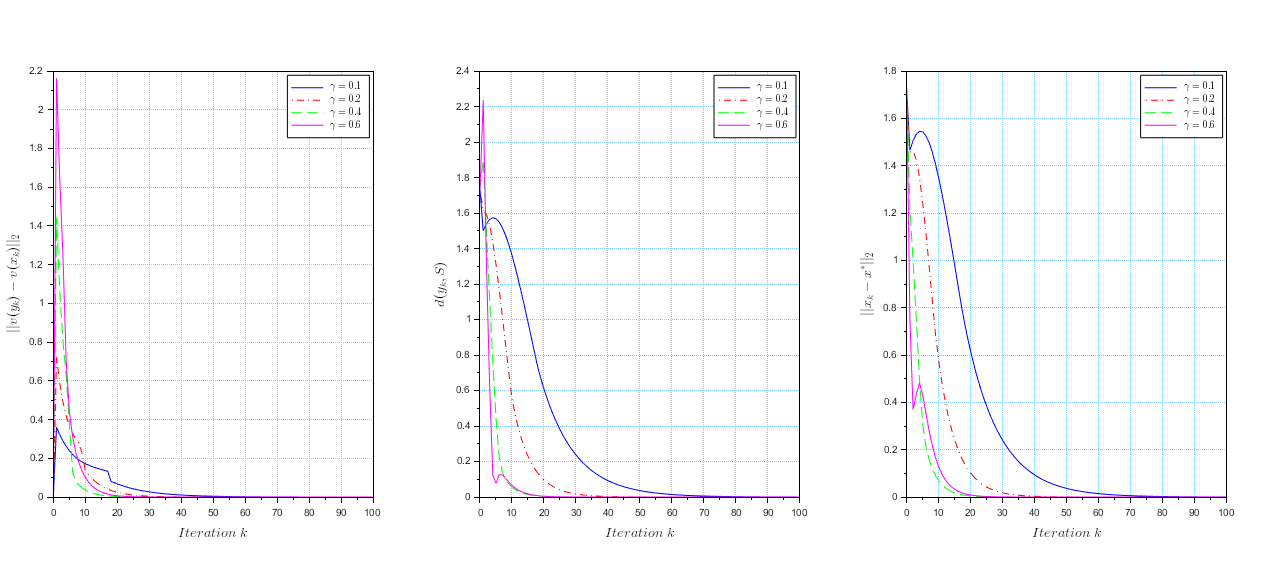}}
	\caption{\centering {\small Iterate errors $\|v(y_k)-v(x_k)\|_2$, $d(y_k,S)$,  $\|x_k-x^*\|_2$ for different values of $\gamma$ and initial points.}}
	\label{error3}
\end{figure}

The panels show that the quantities $\|v(y_k)-v(x_k)\|_2$, $d(y_k,S)$, and $\|x_k-x^*\|_2$ all converge to zero, confirming numerically the convergence established by Theorem \ref{thm4}. Moreover, larger admissible values of $\gamma$ lead to faster numerical convergence.
\end{example}


%
%
%

\section{Conclusion}\label{sec5}
{This work extends the recently introduced framework of pair monotonicity from nonmonotone single-operator inclusion problems to structured sum inclusion problems involving a single-valued operator and a set-valued operator. It demonstrates that pair monotonicity provides a suitable framework for the development of generalized  splitting methods beyond the classical monotone setting. 
Two algorithms based on the transformed and warped resolvents were proposed and analyzed. Weak, strong and linear convergence results were established under suitable assumptions. Numerical examples confirmed the theoretical convergence properties and illustrated the behavior of the proposed methods.
The present work opens several perspectives for future research. These include inertial and accelerated variants and applications to non-convex constrained optimization.}

\end{document}